\documentclass[12 pt,a4paper,twoside,reqno]{amsart}
\usepackage{amsfonts,amssymb,amscd,amsmath,enumerate,verbatim,calc}

\newtheorem{thm}{Theorem}[section]

\newtheorem{cor}{Corollary}[section]

\newtheorem{defn}{Definition}[section]

\usepackage{amsmath, amssymb}
\usepackage{geometry}
\usepackage{hyperref}
\title{A Variant of the Fractional Gamma-Regularized Zeta Kernel}
\author{}
\date{today}

\begin{document}

\author{  Hunduma Legesse Geleta}

\date{\today}
\address{Department of Mathematics, Addis Ababa University, Addis Ababa, Ethiopia.}
\email{hunduma.legesse@aau.edu.et}

 \thanks{}

\keywords{Zeta function; Hypergeometric zeta functions;  Fractional Hypergeometric zeta functions; Integral representation; Uniform convergence and Absolute convergence.}

\begin{abstract}

In this paper, we introduce and study a special function, referred to as a variant of the fractional gamma-regularized zeta kernel, in which the lower incomplete gamma function appears in the numerator, contrasting with the fractional hypergeometric zeta function, where it appears in the denominator. We investigate its analytic properties, convergence behavior, and its connection to the Riemann zeta function. A representation is derived in terms of a multiplicative modification of the Riemann zeta function, and comparisons are made with known zero-free regions and deformation phenomena. We demonstrate that this function defines a deformation of the Riemann zeta function, which interpolates to the alternating zeta function in a specific limit. Additionally, a Dirichlet-type series representation is provided. Potential extensions, including a functional equation and asymptotic analysis, are proposed for future work.
\end{abstract}
\maketitle
\def\theequation{\thesection.\arabic{equation}}

\section*{Mathematics Subject Classification: 33xx, 11xx}

\section{Introduction and Motivation}
\setcounter{equation}{0}

The Riemann zeta function \( \zeta(s) \) is a cornerstone of analytic number theory \cite{TMA}. Numerous generalizations such as Hurwitz zeta function \cite{SR}, the hypergeometric zeta functions \cite{AHND}, Fractional hypergeometric zeta functions \cite{HA, HLG1}and deformations of \( \zeta(s) \), including those involving the incomplete gamma function \cite{HLG2, HA, HLG1}, have been developed in efforts to regularize divergent series, explore spectral analogues, and investigate profound properties such as the distribution of zeros \cite{AHN, Mu, DH, HLG2, LFA, GRSJ, SR}.\\

 This work may be viewed as a continuation of the studies presented in \cite{HA} and \cite{HLG2}. In \cite{HA}, Abdul Hassen and I introduced and analyzed a generalization of the classical Riemann zeta function via fractional hypergeometric zeta functions, using an integral representation. In \cite{HLG2} , I further examined the existence of zero-free regions for these fractional hypergeometric zeta functions in the right half-plane. In both works, gamma-regularized integral representations were employed, typically involving the incomplete gamma function in the denominator. This structure facilitated meaningful deformations of \( \zeta(s) \) under fractional parameters while preserving key analytic features.\\

Motivated by these developments, the present paper introduces a new variant of such regularizations in which the lower incomplete gamma function appears in the numerator. Our goal is to investigate the consequences of this structural inversion: particularly its effects on convergence, deformation behavior, the distribution of zeros, and the approximation of the classical \( \zeta(s) \) under fractional parameters and preserved key analytic properties. \\

This paper is organized as follows. In Section 2, we review key aspects of the Riemann zeta function and its generalization by fractional hypergeometric zeta function. In Sections 3 we present our main results. In particular in section 3.1 we formally define a variant of the fractional gamma-regularized zeta kernel and investigate its analytic properties and  Section 3.2 is devoted to the study of the zeros and zero-free regions of $k_a(s).$ Finally, in Section 4, we present our conclusions and suggest directions for future work.

\maketitle
\def\theequation{\thesection.\arabic{equation}}
\section{Preliminaries}

In this section, we review key aspects of the classical Riemann zeta function and its generalization via the fractional hypergeometric zeta function.

\begin{defn}
The \emph{Riemann zeta function} \( \zeta(s) \) is a function of a complex variable \( s = \sigma + it \), defined by the Dirichlet series
\[
\zeta(s) = \sum_{n=1}^{\infty} \frac{1}{n^s},
\]
which converges for all \( \sigma > 1 \).
\end{defn}

The Riemann zeta function also admits the integral representation
\[
\zeta(s) = \frac{1}{\Gamma(s)} \int_{0}^{\infty} \frac{x^{s-1}}{e^x - 1} \, dx,
\]
which provides a means for analytic continuation beyond the region of absolute convergence of the Dirichlet series.

An equivalent formulation is given by the Euler product:
\[
\zeta(s) = \prod_{p} \left(1 - p^{-s} \right)^{-1},
\]
where the product runs over all prime numbers \( p \). Since a convergent infinite product of non-zero terms cannot vanish, this representation implies that \( \zeta(s) \) is non-zero in the half-plane \( \sigma > 1 \), thereby establishing the zero-free region of the Riemann zeta function in that domain.

Another key identity is the celebrated functional equation:
\[
\zeta(s) = 2(2\pi)^{s-1} \zeta(1 - s) \Gamma(1 - s) \sin\left( \frac{\pi s}{2} \right),
\]
which plays a central role in the study of the distribution of zeros of \( \zeta(s) \).

\begin{defn}
Let \( a > 0 \) be a fixed real parameter. The \emph{fractional hypergeometric zeta function} of order \( a \), denoted \( \zeta_a(s) \), is defined by
\[
\zeta_a(s) = \frac{\Gamma(a)}{\Gamma(s + a - 1)} \int_0^{\infty} \frac{x^{s+a-2} e^{-x}}{\gamma(a, x)} \, dx,
\]
where \( \gamma(a,x) \) is the lower incomplete gamma function.
\end{defn}

This function is defined and analytic for \( \Re(s) = \sigma > 1 \) (see \cite{ref3}). Notably, when \( a = 1 \), the definition recovers the classical Riemann zeta function:
\[
\zeta_1(s) = \zeta(s).
\]

Using the substitution \( x = nu \), where \( n \in \mathbb{N} \), we obtain
\[
\zeta_a(s) = \frac{\Gamma(a)}{\Gamma(s + a - 1)} \int_0^{\infty} \frac{(nu)^{s+a-2} e^{-nu}}{\gamma(a, nu)} \, n \, du,
\]
which simplifies to
\[
\zeta_a(s) n^{1 - a - s} = \frac{\Gamma(a)}{\Gamma(s + a - 1)} \int_0^{\infty} \frac{u^{s+a-2} e^{-nu}}{\gamma(a, nu)} \, du.
\]

Summing both sides over all positive integers \( n \), we arrive at the identity
\[
\zeta_a(s) \zeta(s + a - 1) = \frac{\Gamma(a)}{\Gamma(s + a - 1)} \sum_{n=1}^{\infty} \int_0^{\infty} \frac{u^{s+a-2} e^{-nu}}{\gamma(a, nu)} \, du.
\]

This identity highlights the appearance of \( \gamma(a, nu) \) in the denominator of the integrand defining the fractional hypergeometric zeta function of order \( a \).

\maketitle
\def\theequation{\thesection.\arabic{equation}}
\section{Main Results}
In this section we present our results in two different subsections as follows. In the first subsection we introduce a variant of the fractional gamma-regularized zeta kernel, establish its domain of convergence, and derive a Dirichlet-type series representation. In the second subsection we investigate the analytic structure and zero set of the deformation function.
\subsection{Definition of the Variant Kernel Function}

In this section, we introduce a variant of the fractional gamma-regularized zeta kernel, establish its domain of convergence, and derive a Dirichlet-type series representation. This construction differs from the original formulation by placing the lower incomplete gamma function in the numerator, leading to a distinctive deformation of the classical Riemann zeta function.

\begin{defn}
Let \( a > 0 \). For \( \Re(s) > 1 \), we define the function \( k_a(s) \) by
\[
k_a(s) := \frac{1}{\Gamma(s)} \int_0^\infty x^{s-1} \sum_{n=1}^\infty \frac{e^{-n x} \gamma(a, n x)}{\Gamma(a)} \, dx,
\]
where \( \gamma(a, z) \) denotes the lower incomplete gamma function,
\[
\gamma(a, z) = \int_0^z t^{a-1} e^{-t} \, dt.
\]
\end{defn}

\paragraph{Remark.} Similar to the fractional hypergeometric zeta function (FHGZF), one may consider a shifted variant defined by
\[
\widetilde{k}_a(s) := \frac{1}{\Gamma(s + a - 1)} \int_0^\infty x^{s + a - 2} \sum_{n=1}^\infty \frac{e^{-n x} \gamma(a, n x)}{\Gamma(a)} \, dx.
\]
This yields the factorization
\[
\widetilde{k}_a(s) = \zeta(s + a - 1) \cdot \widetilde{M}_a(s),
\]
where
\[
\widetilde{M}_a(s) := \frac{1}{\Gamma(a)\Gamma(s + a - 1)} \int_0^\infty u^{s + a - 2} e^{-u} \gamma(a, u) \, du.
\]
The domain of convergence for \( \widetilde{k}_a(s) \) extends to \( \Re(s) > 1 - 2a \), allowing a broader analytic window when \( a < 1 \). The analysis of \( \widetilde{k}_a(s) \) proceeds in parallel with that of \( k_a(s) \), differing mainly in the domain shift and normalization.

\begin{thm}
The integral defining \( k_a(s) \) converges absolutely for all \( \Re(s) > 1 \), and the convergence is uniform on compact subsets of the right half-plane \( \Re(s) > 1 \).
\end{thm}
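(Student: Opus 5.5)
Our plan is to dominate the integrand by a nonnegative, $s$-independent integrable function. Then absolute convergence and locally uniform convergence follow at once from the Weierstrass $M$-test (for integrals). Throughout, write $s=\sigma+it$ and $F_a(x):=\sum_{n\ge1}e^{-nx}\gamma(a,nx)/\Gamma(a)$ for $x>0$.

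The first step is to record the two elementary bounds
\[
0\le \frac{\gamma(a,z)}{\Gamma(a)}\le 1 \quad\text{and}\quad \frac{\gamma(a,z)}{\Gamma(a)}\le \frac{z^{a}}{a\Gamma(a)}, \qquad z>0 .
\]
Both follow from $\gamma(a,z)=\int_0^z t^{a-1}e^{-t}\,dt\le \min\{\Gamma(a), z^a/a\}$. The first bound gives
\[
0\le F_a(x)\le \sum_{n\ge1}e^{-nx}=\frac{1}{e^x-1}.
\]
So the series converges for every $x>0$, and $F_a$ is a nonnegative continuous function. Continuity holds because the series converges uniformly on $[\delta,\infty)$ by comparison with $\sum e^{-n\delta}$.

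The second step gives a pointwise majorant in $s$. We have $|x^{s-1}F_a(x)|=x^{\sigma-1}F_a(x)\le x^{\sigma-1}/(e^x-1)$. Let $K$ be a compact subset of $\{\Re s>1\}$, so that $1<\sigma_1\le\sigma\le\sigma_2$ on $K$. Splitting at $x=1$, we use
\[
x^{\sigma-1}\le x^{\sigma_1-1} \ \text{ on } (0,1], \qquad x^{\sigma-1}\le x^{\sigma_2-1} \ \text{ on } [1,\infty).
\]
This yields the majorant
\[
G(x)=\frac{x^{\sigma_1-1}\mathbf 1_{(0,1]}(x)+x^{\sigma_2-1}\mathbf 1_{(1,\infty)}(x)}{e^x-1}.
\]
Near $0$ we have $G(x)\sim x^{\sigma_1-2}$, which is integrable since $\sigma_1>1$. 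Near $\infty$ the factor $e^{-x}$ dominates. Hence $\int_0^\infty G<\infty$. This gives absolute convergence, because $\int_0^\infty x^{\sigma-1}/(e^x-1)\,dx=\Gamma(\sigma)\zeta(\sigma)$ is finite. It also gives uniform convergence on $K$: the tails $\int_0^\varepsilon$ and $\int_R^\infty$ are bounded by the corresponding tails of $G$, independently of $s\in K$. Finally, $1/\Gamma(s)$ is continuous and nonvanishing, hence bounded on $K$, so the prefactor does not affect any of these conclusions.

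The only delicate point is the endpoint $x\to0^+$, where $F_a(x)$ blows up like $1/x$. This is exactly what forces $\sigma>1$ in the crude bound above. One could also check that $1/x$ is the true order of $F_a$: the sum behaves like $\frac1x\int_0^\infty e^{-u}\gamma(a,u)\,du/\Gamma(a)$. Consequently, no better abscissa is available by this method, and the crude bound $\gamma\le\Gamma(a)$ already suffices for the stated half-plane. (The sharper bound $z^a/a$ is not needed here.) We would also add a remark that the interchange of sum and integral is justified by Tonelli's theorem, since all terms are nonnegative for real $x$. This is what the subsequent factorization $k_a(s)=\zeta(s)\,M_a(s)$ would use.
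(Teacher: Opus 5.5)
Your proposal is correct and follows the same route as the paper: you bound $\gamma(a,nx)\le\Gamma(a)$, which dominates the integrand by $x^{\sigma-1}/(e^x-1)$, and this is integrable for $\sigma>1$. Your split majorant $G$ (using $\sigma_1$ on $(0,1]$ and $\sigma_2$ on $(1,\infty)$) gives the $s$-independent bound that local uniformity requires, a step the paper only asserts, so your version is the more complete one.
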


\begin{proof}
We begin by bounding the integrand. For \( x > 0 \) and \( a > 0 \), it holds that \( \gamma(a, x) \leq \Gamma(a) \), so that
\[
0 \leq \frac{e^{-n x} \gamma(a, n x)}{\Gamma(a)} \leq e^{-n x}.
\]
Therefore,
\[
\left| x^{s-1} \sum_{n=1}^{\infty} \frac{e^{-n x} \gamma(a, n x)}{\Gamma(a)} \right|
\leq x^{\Re(s)-1} \sum_{n=1}^{\infty} e^{-n x} = \frac{x^{\Re(s)-1}}{e^x - 1}.
\]
The function \( \frac{x^{\Re(s)-1}}{e^x - 1} \) is integrable over \( (0, \infty) \) whenever \( \Re(s) > 1 \), as it defines the classical integral representation of the Riemann zeta function:
\[
\zeta(s) = \frac{1}{\Gamma(s)} \int_0^\infty \frac{x^{s-1}}{e^x - 1} \, dx.
\]
Thus, the integral defining \( k_a(s) \) converges absolutely and uniformly on compact subsets of \( \Re(s) > 1 \), and \( k_a(s) \) is analytic in this region.
\end{proof}

\begin{thm}[Dirichlet-Type Series Representation]
Let \( a > 0 \) and \( \Re(s) > 1 \). Then
\[
k_a(s) = M_a(s)\zeta(s),
\]
where
\[
M_a(s) := \frac{1}{\Gamma(a)\Gamma(s)} \int_0^\infty u^{s-1} e^{-u} \gamma(a, u) \, du.
\]
\end{thm}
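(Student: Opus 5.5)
The plan is to exchange the sum and the integral in the definition of \(k_a(s)\), then rescale each term so that the \(n\)-dependence factors out as \(n^{-s}\). This is the same device used in the Preliminaries for \(\zeta_a(s)\).

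\textbf{Interchange of sum and integral.} Fix \(s\) with \(\sigma=\Re(s)>1\). By the bound in the proof of the previous theorem, every term satisfies
\[
\left| x^{s-1}\frac{e^{-nx}\gamma(a,nx)}{\Gamma(a)}\right| \le x^{\sigma-1}e^{-nx}.
\]
Summing over \(n\) and integrating, the sum of the integrals of the absolute values is dominated by
\[
\sum_{n\ge1}\int_0^\infty x^{\sigma-1}e^{-nx}\,dx=\Gamma(\sigma)\zeta(\sigma)<\infty .
\]
Fubini--Tonelli, or dominated convergence applied to the partial sums, therefore justifies
\[
k_a(s)=\frac{1}{\Gamma(a)\Gamma(s)}\sum_{n=1}^\infty \int_0^\infty x^{s-1}e^{-nx}\gamma(a,nx)\,dx .
\]
This justification is the only genuine analytic point, and it is essentially already contained in the preceding theorem.

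\textbf{Rescaling each term.} In the \(n\)-th integral I substitute \(u=nx\), so \(x=u/n\) and \(dx=du/n\). This gives
\[
\int_0^\infty x^{s-1}e^{-nx}\gamma(a,nx)\,dx = n^{-s}\int_0^\infty u^{s-1}e^{-u}\gamma(a,u)\,du .
\]
The integral on the right converges absolutely for \(\sigma>1\), since \(0\le\gamma(a,u)\le\Gamma(a)\) and \(e^{-u}\) controls the behaviour at infinity. In fact, near \(0\) we have \(\gamma(a,u)\sim u^a/a\), so it even converges for \(\sigma>-a\). The resulting integral is independent of \(n\), so it factors out of the sum. Summing \(n^{-s}\) over \(n\ge1\) gives \(\zeta(s)\), and collecting the constants \(1/(\Gamma(a)\Gamma(s))\) yields exactly \(k_a(s)=M_a(s)\zeta(s)\).

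Beyond the interchange step, I see no real obstacle. As an optional consistency check, expanding \(\gamma(a,u)\) by its integral definition and swapping the order of integration gives the closed form
\[
M_a(s)=\frac{\Gamma(s+a)}{a\,\Gamma(a)\Gamma(s)\,2^{s+a}}\,{}_2F_1\!\left(1,s+a;a+1;\tfrac12\right).
\]
This is not needed for the theorem.
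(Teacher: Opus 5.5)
Your proof is correct and follows essentially the paper's argument: justify exchanging sum and integral by dominated convergence, then substitute $u=nx$ in each term to extract $n^{-s}$ and sum to $\zeta(s)$. Your explicit Tonelli bound $\sum_{n}\int_0^\infty x^{\sigma-1}e^{-nx}\,dx=\Gamma(\sigma)\zeta(\sigma)<\infty$ supplies the justification the paper only asserts, and your optional ${}_2F_1$ closed form is also correct (at $a=1$ it reduces to $1-2^{-s}$).
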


\begin{proof}
We justify term-by-term integration using the dominated convergence theorem. For \( \Re(s) > 1 \),
\[
k_a(s) = \frac{1}{\Gamma(s)} \int_0^\infty x^{s-1} \sum_{n=1}^\infty \frac{e^{-n x} \gamma(a, n x)}{\Gamma(a)} \, dx
= \sum_{n=1}^\infty \frac{1}{\Gamma(a)\Gamma(s)} \int_0^\infty x^{s-1} e^{-n x} \gamma(a, n x) \, dx.
\]
Substituting \( u = n x \), we obtain:
\[
k_a(s) = \sum_{n=1}^\infty \frac{1}{n^s} \cdot \frac{1}{\Gamma(a)\Gamma(s)} \int_0^\infty u^{s-1} e^{-u} \gamma(a, u) \, du
= \zeta(s) M_a(s).
\]
\end{proof}

\paragraph{Interpretation.} The function \( M_a(s) \) acts as a deformation factor, modulating the classical Riemann zeta function. For fixed \( a > 0 \), the factor \( M_a(s) \) is entire in \( s \), and \( k_a(s) \) may be viewed as a gamma-regularized convolution of \( \zeta(s) \) with the lower incomplete gamma kernel. This formulation opens a path for analytic continuation and zero analysis.

\begin{cor}[Special Case: \( a = 1 \)]
Let \( \Re(s) > 1 \). Then
\[
k_1(s) = \sum_{n=1}^\infty \frac{(-1)^{n+1}}{n^s} = \eta(s),
\]
the Dirichlet eta function.
\end{cor}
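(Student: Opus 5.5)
The plan is to specialize the Dirichlet-type series representation $k_a(s)=M_a(s)\zeta(s)$ of the preceding theorem to $a=1$. Then $M_1(s)$ is evaluated in closed form, and the result is compared with the standard factorization $\eta(s)=(1-2^{1-s})\zeta(s)$, valid for $\Re(s)>1$. No new convergence argument is needed, since absolute convergence and the term-by-term integration for $\Re(s)>1$ were already justified.

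\emph{Step 1.} For $a=1$ we have $\Gamma(1)=1$ and $\gamma(1,u)=\int_0^u e^{-t}\,dt=1-e^{-u}$. Hence
\[
M_1(s)=\frac{1}{\Gamma(s)}\int_0^\infty u^{s-1}\bigl(e^{-u}-e^{-2u}\bigr)\,du .
\]

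\emph{Step 2.} Apply $\int_0^\infty u^{s-1}e^{-cu}\,du=\Gamma(s)c^{-s}$, valid for $c>0$ and $\Re(s)>0$, to each term. This gives $M_1(s)=1-2^{-s}$.

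\emph{Step 3.} As an independent check, expand the kernel directly:
\[
\sum_{n\ge1}e^{-nx}\bigl(1-e^{-nx}\bigr)=\sum_{n\ge1}e^{-nx}-\sum_{n\ge1}e^{-2nx}.
\]
Reading off the coefficient of $e^{-mx}$ gives a Dirichlet series $\sum_m c_m m^{-s}$ for $k_1(s)$.

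\emph{Step 4.} Match the resulting factor $M_1(s)\zeta(s)$ against $(1-2^{1-s})\zeta(s)=\sum_{n\ge1}(-1)^{n+1}n^{-s}$.

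The main obstacle is Step 4, and I expect it to fail as the statement is worded. Steps 2 and 3 both yield $M_1(s)=1-2^{-s}$; in Step 3 the coefficients are $c_m=1$ for $m$ odd and $c_m=0$ for $m$ even. This gives
\[
k_1(s)=(1-2^{-s})\zeta(s)=\sum_{n\ \mathrm{odd}}n^{-s},
\]
whereas $\eta(s)$ carries the factor $1-2^{1-s}$. The two factors differ: at $s=2$ they give $\tfrac34\zeta(2)$ and $\tfrac12\zeta(2)$ respectively.

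My concrete plan at this point is to check whether the intended normalization differs from the displayed definition. Adding $x^{s-1}$ weights in the kernel does not help. However, the variant whose kernel is $\sum_{n\ge1}(-1)^{n+1}e^{-nx}$ gives exactly $\eta(s)$ by the same Mellin computation, since $\sum_{n\ge1}(-1)^{n+1}e^{-nx}=1/(e^x+1)$ and
\[
\frac{1}{\Gamma(s)}\int_0^\infty\frac{x^{s-1}}{e^x+1}\,dx=\eta(s).
\]
So the proof of the corollary as stated reduces to Steps 1--2 followed by this identification. It can only go through if the kernel is read in a form whose $a=1$ specialization is the alternating exponential sum; with the definition as printed, the argument instead proves the odd-term series above.
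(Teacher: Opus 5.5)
Your analysis is correct, and your Steps 1--2 are exactly the paper's proof. The paper specializes $k_a(s)=M_a(s)\zeta(s)$ to $a=1$, uses $\gamma(1,u)=1-e^{-u}$, and gets $M_1(s)=1-2^{-s}$. It then writes $k_1(s)=(1-2^{-s})\zeta(s)=\eta(s)$, and that final equality is false; the correct identity is $\eta(s)=(1-2^{1-s})\zeta(s)$, as you note. So the failure you anticipated in Step 4 is a defect of the statement and of the paper's proof, not of your argument. What both computations actually establish is
\[
k_1(s)=(1-2^{-s})\zeta(s)=\sum_{n\ \mathrm{odd}}n^{-s}=\tfrac12\bigl(\zeta(s)+\eta(s)\bigr),
\]
which is the Dirichlet lambda function. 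Your kernel check settles the matter. Since $\sum_{n\ge1}e^{-nx}(1-e^{-nx})=1/(2\sinh x)=\sum_{k\ge0}e^{-(2k+1)x}$, all Dirichlet coefficients of $k_1$ lie in $\{0,1\}$. By uniqueness of Dirichlet coefficients (or your value at $s=2$), the printed definition cannot yield the coefficients $(-1)^{n+1}$ of $\eta$.

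Your proposed repair can be made concrete. Replace the numerator $\gamma(a,nx)/\Gamma(a)$ by $2\gamma(a,nx)/\Gamma(a)-1$. This changes the deformation factor to $2M_a(s)-1$, which equals $1-2^{1-s}$ at $a=1$. Equivalently, $\sum_{n\ge1}e^{-nx}(1-2e^{-nx})=1/(e^x+1)$, whose Mellin transform is $\Gamma(s)\eta(s)$. That is, however, a different function from the $k_a$ defined in the paper.

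The error also propagates. The paper's later assertion $M_1(s)=\eta(s)/\zeta(s)$ should read $M_1(s)=1-2^{-s}$. Its zeros $s=2\pi i k/\log 2$ ($k\in\mathbb{Z}$) lie on $\Re(s)=0$, not on $\Re(s)=1$, where the zeros of $\eta(s)/\zeta(s)=1-2^{1-s}$ lie.
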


\begin{proof}
For \( a = 1 \), we have \( \gamma(1, u) = 1 - e^{-u} \). Thus,
\[
M_1(s) = \frac{1}{\Gamma(s)} \int_0^\infty u^{s-1} e^{-u} (1 - e^{-u}) \, du
= \frac{1}{\Gamma(s)} \left[ \Gamma(s) - \frac{\Gamma(s)}{2^s} \right] = 1 - \frac{1}{2^s}.
\]
It follows that
\[
k_1(s) = \left(1 - \frac{1}{2^s} \right) \zeta(s) = \eta(s).
\]
\end{proof}

\paragraph{Conclusion.}
The function \( k_a(s) \) provides a gamma-regularized deformation of the Riemann zeta function, interpolating between \( \zeta(s) \) and its alternating variant \( \eta(s) \) as \( a \to 1 \). Unlike the original FHGZF where \( \gamma(a, nx) \) appears in the denominator and suppresses contributions from small \( n \), the present formulation enhances these contributions, yielding new analytic and asymptotic behavior worthy of further investigation.

\subsection{Zero Structure and Comparison with \( \zeta(s) \)}

Since \( k_a(s) = \zeta(s) M_a(s) \), the zero set of \( k_a(s) \) depends on both the classical Riemann zeta function \( \zeta(s) \) and the deformation factor \( M_a(s) \). When \( M_a(s) \) is nonzero, the nontrivial zeros of \( k_a(s) \) coincide with those of \( \zeta(s) \). However, zeros of \( M_a(s) \) introduce additional, deformed zeros that do not correspond to any zero of \( \zeta(s) \). Numerical experiments suggest that the critical strip structure is largely preserved under this deformation, although the locations of individual zeros are subtly shifted as a function of \( a \).

Regions of analyticity and zero-free zones for \( k_a(s) \) are at least as broad as those for \( \zeta(s) \) in the half-plane \( \Re(s) > 1 \). Nevertheless, the full extent of these domains remains an open question, particularly as \( a \) varies. The deformation factor \( M_a(s) \) may locally suppress or amplify the zeta contribution, depending on its sign and magnitude.

We now investigate the analytic structure and zero set of the deformation function
\[
M_a(s) = \frac{1}{\Gamma(a)\Gamma(s)} \int_0^\infty u^{s-1} e^{-u} \gamma(a, u) \, du,
\]
which governs the modulation behavior of \( k_a(s) \).

\subsection*{Properties of \( M_a(s) \)}

The function \( M_a(s) \) admits a series expansion:
\[
M_a(s) = \sum_{n=0}^\infty \frac{\Gamma(s+a+n)}{n!(a+n)\Gamma(a)\Gamma(s)},
\]
which defines an entire function in the complex variable \( s \), due to the convergence and regularity of each term.

The deformation factor \( M_a(s) \) satisfies the following properties:
\begin{itemize}
  \item[1.] It is entire in \( s \) for fixed \( a > 0 \);
  \item[2.] It is real-analytic, hence symmetric under complex conjugation when \( a \in \mathbb{R} \);
  \item[3.] It exhibits order-one growth in vertical strips;
  \item[4.] It has no poles; but has isolated zeros.
\end{itemize}

While no closed-form expression is known for the zeros of \( M_a(s) \) in general, several analytic and numerical tools allow for their investigation:
\begin{itemize}
  \item[1.] For \( a = 1 \), the function simplifies to \( M_1(s) = \eta(s)/\zeta(s) \), so its zeros coincide with those of the alternating zeta function \( \eta(s) \);
  \item[2.] For general \( a \), one may compute \( M_a(s) \) via direct numerical quadrature or its series representation and apply root-finding algorithms to locate its zeros;
  \item[3.] The zeros of \( M_a(s) \) vary continuously with \( a \), giving rise to continuous zero trajectories in the complex plane;
  \item[4.] When \( a \in \mathbb{R} \), all non-real zeros occur in complex-conjugate pairs, a consequence of real-analyticity.
\end{itemize}

These zeros contribute to the zero structure of \( k_a(s) \) and encode the deformation dynamics as \( a \) varies. Their behavior and distribution form a rich subject for further exploration.

\section{Conclusion and Future Work}

\subsection*{Conclusion}

We have introduced and analyzed a novel variant of the fractional gamma-regularized zeta kernel in which the lower incomplete gamma function appears in the numerator. This leads to the function \( k_a(s) \), which deforms the Riemann zeta function via a multiplicative factor \( M_a(s) \). The analytic structure of \( k_a(s) \) inherits key features from \( \zeta(s) \), while exhibiting new behavior induced by the gamma-based modulation.\\

The deformation function \( M_a(s) \) is entire, real-analytic for real \( a \), and admits a convergent series expansion. It governs how \( k_a(s) \) interpolates between \( \zeta(s) \) and its alternating form \( \eta(s) \), with \( k_1(s) = \eta(s) \) in particular. The zeros of \( M_a(s) \) introduce novel structure into \( k_a(s) \), especially in the critical strip, making this construction a compelling object of study in the broader context of zeta regularization, functional deformations, and spectral analysis.

\subsection*{Future Work}

Several promising directions remain open for investigation:
\begin{itemize}
    \item[1.] Developing an analytic continuation of \( k_a(s) \) to the full complex plane and identifying potential poles or branch points;
    \item[2.] Studying the asymptotic behavior of \( M_a(s) \) as \( a \to 0^+ \) or \( a \to \infty \), especially in relation to scaling limits and degeneration of the kernel;
    \item[3.] Exploring the possibility of a modified functional equation for \( k_a(s) \) analogous to that of the Riemann zeta function;
    \item[4.] Conducting a detailed numerical analysis of nontrivial zero trajectories of \( k_a(s) \) as functions of the deformation parameter \( a \).
\end{itemize}

These avenues promise to shed further light on the interplay between gamma-regularized structures and the classical theory of the Riemann zeta function.\\

\textbf{Data Availability:} Data sharing not applicable to this article as no datasets were generated or analysed during the current study.\\

\textbf{Conflicts on Interest:}	 The author (solo) declares that there are no conflicts of interest regarding the publication.\\

\textbf{Funding:} No funding is received in conducting this research.\\

\end{document}